\documentclass[10pt]{article}


\usepackage{amsmath, amssymb, amsthm} 
\usepackage{color}
         \newtheorem{theorem}{Theorem}
	 \newtheorem{proposition}[theorem]{Proposition}
	 \newtheorem{lemma}[theorem]{Lemma}
         \newtheorem{reform}{Problem}

\theoremstyle{definition}        
         \newtheorem{remark}[theorem]{Remark}

\newcommand \Rbb {\mathbb{R}}
\newcommand \Tbb {\mathbb{T}}
\newcommand \Cbb {\mathbb{C}}
\newcommand \Dbb {\mathbb{D}}

\newcommand \be {\begin{equation}}
\newcommand \ee {\end{equation}}

\newcommand \si {\sigma}
\newcommand \ka {\kappa}
\newcommand \om {\omega}

\parskip 3pt

\def \t {\tilde} 

\newcommand \pa {\partial}

\newcommand \al {\alpha}
\newcommand \de {\delta}
\newcommand \ep {\epsilon}

\newcommand \Ga {\Gamma}

\newcommand \Acal {\mathcal{A}}
\newcommand \Ccal {\mathcal{C}}

\newcommand \Scal {\mathcal{S}}

\DeclareMathOperator\Real{Re}
\DeclareMathOperator\Imag{Im}


\title{On the existence of stationary splash singularities for the Euler
  equations}
\author{Diego C\'ordoba\thanks{dcg@icmat.es}, Alberto Enciso\thanks{aenciso@icmat.es} and Nastasia Grubic\thanks{nastasia.grubic@icmat.es}}
\date{\normalsize Instituto de Ciencias Matem\'aticas\\ Consejo Superior de
  Investigaciones Cient\'\i ficas\\ 28049 Madrid, Spain}


\begin{document}


\maketitle
 
\begin{abstract} 
  In this paper we discuss the existence of stationary incompressible fluids with splash singularities. Specifically, we
  show that there are stationary solutions to the Euler equations with
  two fluids whose interfaces are arbitrarily close to a
  splash, and that there are stationary water waves with splash
  singularities.
\end{abstract}

 


\section{Introduction}

   The aim of this paper is to seek for stationary splash singularities for a two-fluid interface. We consider the two-fluid
	incompressible irrotational Euler equations in $\Rbb^2$ where the interface 
	$$
\Scal = \{z(\al) = (z_1(\al), z_2(\al)) \,|\, \alpha\in \Rbb\}
$$ 
	separates the plane in two regions $\Omega_j$, with
        $j=1,2$. Each $\Omega_j$ denotes the region occupied by the
        two different fluids with velocities $v^j=(v_1^j,v_2^j)$,
        different constant densities $\rho_j$ and pressures $p^j$,
        which must satisfy the following equations:
	
\begin{subequations}\label{equations}
\begin{align}
\rho_j (v^j \cdot \nabla )v^j = - \nabla p^j - g \rho_j \, e_2 \quad &\text{in} \quad \Omega_j,\\
\nabla\cdot v^j=0  \quad \mathrm{and} \quad \nabla^{\perp}v^j=0  \quad &\text{in} \quad \Omega_j, \\
v^j \cdot (\pa_\al z)^\perp = 0 \quad &\mathrm{on} \quad \Scal,\\
p^1 - p^2 = -\sigma K \quad &\mathrm{on} \quad \Scal.
\end{align} 
\end{subequations}
Here $j\in\{1,2\}$, $\sigma>0$ is the surface tension coefficient,
$e_2$ is the second vector of a Cartesian basis and
$K$ is the curvature of the interface. 
	
	For the time dependent case of water waves (that is, $\rho_1 =0$, the
        fluid is irrotational and no surface tension),
        Castro--Cordoba--Fefferman--Gancedo--Gomez-Serrano \cite{CCFGG1} showed the formation  in finite time of splash and splat singularities. A splash singularity is when the interface remains smooth but self-intersects at a point and a splat singularity is when it self-intersects along an arc. The system of water waves in $\Rbb^2$ can be written in terms of the boundary $\Scal(t) = \{z(\al,t) = (z_1(\al,t), z_2(\al,t)) \,|\, \alpha\in \Rbb\}$ and the measure of the vorticity in the boundary $\nabla^\perp \cdot v  =  \om(\al,t)\delta (x - z(\al,t))$. Notice that for these types of singularities to happen the amplitude of the  vorticity $\om(\al,t)$ has to blow-up at the point of the self-intersection.   With a different approach Coutand--Shkoller proved  in \cite{CS1} that these two types of singularities develop when you consider non zero vorticity. The presence of surface tension, see \cite{CCFGG2}, does not prevent the formation of a splash and a splat singularity. 
	
	In a recent work by Fefferman--Ionescu--Lie \cite{FIL} they
        showed that the presence of a second fluid ($\rho_2,\rho_1>0$)
        prevents both splash and splat singularities to form in finite
        time. The condition $\rho_2,\rho_1>0$ is used to show a
        critical $L^{\infty}$ bound for the measure  of the vorticity
        in the boundary. With a different approach, similar results
        have been obtained by Coutand--Shkoller \cite{CS2} for the vortex sheet.

        The main goal of this paper is not to study the dynamics in
        which a non-intersecting smooth curve self-intersects in
        finite-time but to show the existence of stationary solutions
        which have a splash singularity. It is worth recalling that
        in the last few years the properties of stationary solutions
        to the Euler equation have attracted considerable attention,
        which has led to recent results concerning e.g.\ Liouville
        theorems~\cite{Nadirashvili,Chae}, connections with coadjoint
        orbits in the space of volume-preserving
        diffeomorphisms~\cite{Sverak}, the existence of weak
        solutions~\cite{CS}, mixing properties for the flow~\cite{KKP} and the existence of knotted vortex lines
        and thin vortex tubes~\cite{EP12,EP15}.

  Specifically, in this paper we will prove two related results
  concerning the existence of splash singularities in stationary
  solutions of the Euler equations. In both cases we will assume that
  the fluid is {\em periodic}\/, i.e., that the pressure, the velocity of the fluid
  and the geometry of the interface remain invariant under the
  translation
\begin{equation*}
(x,y)\mapsto (x+2\pi,y)\,.
\end{equation*}
Suitable boundary conditions are also specified as $y\to\pm\infty$;
full details are given in Section~\ref{sec:1}.

The first result asserts that there are almost-splash stationary
solutions to the Euler equations with two fluids. To make precise what
we understand by ``almost splash'', consider a curve $z(\al)$
parametrized by an arc-length parameter~$\al$. For positive but
arbitrarily small~$\eta$, we say that it is an {\em
  $\eta$-splash curve}\/ if
\begin{equation*}
\inf_{\al<\beta}\frac{|z(\al)-z(\beta)|}{\min\{\al-\beta,1\}}\leq \eta
\end{equation*}
and if moreover this condition is only satisfied when $\al$
and~$\beta$ respectively lie in intervals $[\al_1,\al_2]$ and $[\beta_1,\beta_2]$ whose lengths
are bounded by some quantity that tends to zero as $\eta\searrow0$
(modulo periodicity). The precise (but fixed) bound that we take is not important.

Intuitively speaking, this means that there are two points in the
curve whose chord distance (that is, their distance as measured in the
ambient space~$\Rbb^2$) is at most~$\eta$ while their distance as
measured along the curve is not small. The condition that the curve be
parametrized by arc-length ensures that this actually corresponds to
the intuition that the curve is close to self-intersecting, while the
assertion about the condition being satisfied only for $\al$ and $\beta$
in two small intervals guarantees that the shape of the $\eta$-splash curves
actually tend to a ``splash'' as $\eta\searrow0$, and not to other
sort of self-intersecting curve such as a splat curve. In particular,
a splash curve can be defined as an $\eta$-splash curve with $\eta=0$.

With this definition, our first result can then be stated as follows:

\begin{theorem}\label{T.two}
  Let us fix the density of the second fluid $\rho_2>0$ and consider
  any $\eta>0$. For any small enough $\rho_1\geq0$ and $g$ there is
  some positive surface tension coefficient $\si$ such that there is a
  periodic solution to the two-fluid problem~\eqref{equations} for
  which the interface $\Scal$ is an $\eta$-splash curve.
\end{theorem}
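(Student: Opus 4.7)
The plan is to construct the required solution as a small perturbation of an exact Crapper wave. When $\rho_1 = 0$ and $g = 0$, use of Bernoulli's equation in $\Omega_2$ together with the boundary conditions in~\eqref{equations} collapses the stationary system to Crapper's equation
\[
\tfrac12\,\rho_2\,|\nabla \phi^2|^2 + \sigma K = \text{const} \qquad \text{on } \Scal,
\]
coupled with harmonicity and tangency of the velocity potential $\phi^2$ in $\Omega_2$; the other potential $\phi^1$ can then be taken to be an arbitrary constant. Crapper's classical explicit one-parameter family of $2\pi$-periodic analytic capillary waves, with amplitude $a\in[0,a_*)$, provides smoothly embedded interfaces whose profile tangentially self-intersects at a single point per period in the limit $a\nearrow a_*$. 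For any prescribed $\eta>0$ I therefore fix an amplitude $a_\eta$ close enough to $a_*$ that the corresponding Crapper interface $z_0$ is an $(\eta/2)$-splash curve in the sense defined above the theorem, and then rescale the surface tension to the value $\sigma_0>0$ that makes $z_0$ of period $2\pi$. This supplies the base point $(z_0,\mathrm{const},\phi^2_0)$ of the forthcoming perturbation argument.

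I next reformulate the full stationary two-fluid system~\eqref{equations} as a nonlinear operator equation $F(z,\phi^1,\phi^2;\rho_1,g)=0$ between suitable Banach spaces of periodic functions of $\alpha$ (for concreteness some H\"older or analytic class), with the far-field conditions as $y\to\pm\infty$ specified in Section~\ref{sec:1} imposed and with the trivial symmetries (horizontal translations, additive constants in $\phi^j$, overall Bernoulli constants) factored out. By construction the Crapper base point is a zero of $F$ at $(\rho_1,g)=(0,0)$, and the aim is to apply the implicit function theorem in the two small parameters $(\rho_1,g)$ to produce a smooth branch of nearby solutions for all sufficiently small $\rho_1\geq0$ and $g$. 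Because such a branch is $C^1$-close to $(z_0,\phi^2_0)$, the resulting interface is automatically an $\eta$-splash curve once $(\rho_1,g)$ is small enough in a manner depending on $\eta$.

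The chief obstacle is the invertibility of the Fr\'echet derivative $D_{(z,\phi^1,\phi^2)}F$ at the base point. This linearisation is pseudo-differential and contains the Dirichlet-to-Neumann maps of both domains $\Omega_j$; at $\rho_1=0$ the $\Omega_1$-side decouples, so invertibility reduces to the spectral analysis of the linearised Crapper operator together with an auxiliary Laplace problem in $\Omega_1$. Crapper waves of amplitude strictly below $a_*$ are non-degenerate modulo the obvious translations, so the linearisation is invertible on the appropriate quotient; however, its inverse norm degenerates as $a_\eta\nearrow a_*$, which is precisely why the allowable neighbourhood of parameters in the theorem must shrink with $\eta$. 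Once these quantitative bounds are in hand, the extra coupling introduced in the $\Omega_1$-side for $\rho_1>0$ enters with an explicit prefactor $\rho_1$ and is therefore absorbed by the implicit function theorem, yielding the desired stationary solution with an $\eta$-splash interface.
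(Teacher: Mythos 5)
Your overall strategy --- perturbing a Crapper wave that is already an $(\eta/2)$-splash curve in the two small parameters $(\rho_1,g)$ --- is the same as the paper's, but the central analytic step is asserted rather than proved, and as asserted it is false. You claim that Crapper waves below the critical amplitude are ``non-degenerate modulo the obvious translations,'' so that the linearisation becomes invertible after quotienting by symmetries. In the hodograph/Levi-Civita formulation used in the paper, the linearised Crapper operator $\Ga u = q\,du/d\al + \cosh(H\theta_A)\,Hu$, acting on odd periodic functions, is injective but \emph{not} surjective: it is Fredholm of index $-1$ with a one-dimensional cokernel spanned by $\cos\theta_A$ (Lemma~\ref{lem:Ga}). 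This cokernel does not come from a translation or gauge symmetry and cannot be removed by passing to a quotient of the domain; it is an obstruction in the \emph{target}. The paper deals with it by a Lyapunov--Schmidt reduction in which the Bernoulli constant $\kappa$ is kept as an extra unknown, and the reduced scalar equation is solved because $\pa_\kappa\langle\cos\theta_A,\, G_1\rangle=-2\pi\neq0$. Your proposal instead lists ``overall Bernoulli constants'' among the trivial degrees of freedom to be factored out, which discards precisely the parameter needed to kill the cokernel. Without this step the implicit function theorem does not apply, so the proof has a genuine gap at its core.

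A secondary omission: for $\rho_1>0$ you must also solve for the vorticity amplitude (equivalently, the trace data coupling the two Laplace problems), and the relevant linear operator is $1+\Acal(z_A)$ with $\Acal$ built from the Birkhoff--Rott integral. Its invertibility (Proposition~\ref{prop:A}) holds because the eigenvalues of $\Acal(z)$ have modulus strictly less than $1$ for non-self-intersecting curves, and it is this operator --- not the linearised Crapper operator --- whose inverse degenerates as the arc-chord condition fails; this is exactly why the statement requires $\eta>0$. You correctly anticipate that some inverse norm blows up as $a_\eta\nearrow a_*$, but you attribute it to the wrong operator and give no argument for the quantitative bounds you invoke.
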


It should be noticed that Theorem~\ref{T.two} does not contradict the
result of Fefferman--Ionescu--Lie~\cite{FIL} on the absence of splash
singularities when $\rho_1$ is positive. Of course, the result for the
2D Euler equations has also a bearing on the
three-dimensional case with periodic boundary conditions, as it
corresponds to 3D solutions that are invariant under translations
along the third axis.

Our second theorem concerns the existence of stationary splash
singularities for water waves, that is, for $\rho_1=0$. Essentially,
what we prove is that in this case one can take $\eta=0$. These
solutions will necessarily have an unbounded amplitude of the
vorticity.

\begin{theorem}\label{T.one}
  Let us fix the density of the second fluid $\rho_2>0$ and assume that $\rho_1=0$. Then for any small enough $g$ there is
  some positive surface tension coefficient $\si$ such that there is a
  periodic solution to the water waves problem~\eqref{equations} for
  which the interface $\Scal$ has a splash singularity.
\end{theorem}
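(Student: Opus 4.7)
The strategy combines the conformal desingularization technique of Castro--C\'ordoba--Fefferman--Gancedo--G\'omez-Serrano~\cite{CCFGG1} with the almost-splash construction underlying Theorem~\ref{T.two}, taking advantage of the fact that in the water-waves regime $\rho_1=0$ the Fefferman--Ionescu--Lie obstruction~\cite{FIL} does not apply and the velocity is allowed to diverge at the interface. Specifically, I would fix a candidate splash point $p_0\in\Cbb$ on the axis of symmetry, introduce the conformal map $\Phi(z)=(z-p_0)^{1/2}$ with branch cut placed in the vacuum region $\Omega_1$, and transfer the unknowns to the desingularized domain $\tilde\Omega:=\Phi(\Omega_2)$. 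When $\Scal$ has a splash at $p_0$, its image $\tilde\Scal=\Phi(\Scal)$ is a smooth, periodic, non-self-intersecting curve, and the transformed potential $\tilde\phi=\phi\circ\Phi^{-1}$ is still harmonic. The identity $|\nabla\phi(z)|^2=|\nabla\tilde\phi(w)|^2/(4|w|^2)$ shows that the conformal factor precisely absorbs the singularity of $|v|^2$ at the splash, so the stationary Bernoulli relation pulls back to a boundary condition on $\tilde\Scal$ that is regular away from $w_0=0$ and carries there an explicit singular weight.

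Next, I would apply Theorem~\ref{T.two} with $\rho_1=0$ and the prescribed $\rho_2>0$ to obtain, for each small $\eta>0$, a periodic stationary water-wave solution $(z^\eta,\phi^\eta,p^\eta)$ whose interface is an $\eta$-splash curve, and then work entirely in the transformed variables $(\tilde z^\eta,\tilde\phi^\eta)$ on the corresponding family of desingularized domains. Using the a~priori estimates underpinning the proof of Theorem~\ref{T.two}, but transferred to the desingularized setting --- where the splash distance $\eta$ simply measures the distance of $\tilde\Scal$ from the point $w_0=0$ --- the transformed profiles should remain bounded in a suitable H\"older or Sobolev scale uniformly in $\eta$. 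A compactness argument, or an implicit function theorem applied directly to the desingularized stationary problem, then yields a limiting solution as $\eta\searrow0$, and inverting $\Phi$ transfers this limit back to a genuine splash solution whose velocity blows up at $p_0$ as $|v|\sim|z-p_0|^{-1/2}$, consistent with the unbounded vorticity amplitude announced in the statement.

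The main obstacle is the compatibility analysis at the splash point, which is traversed twice by $\Scal$, say at parameters $\al_\pm$ with $z(\al_\pm)=p_0$. Subtracting the stationary Bernoulli equation at these two parameters gives
\[
\tfrac12\rho_2\bigl(|v|^2(\al_+)-|v|^2(\al_-)\bigr)+\sigma\bigl(K(\al_+)-K(\al_-)\bigr)=0,
\]
so the leading singular contributions of $|v|^2$ on the two branches must cancel, while the subleading terms must be balanced by a judicious choice of the surface-tension coefficient $\sigma=\sigma(g,\rho_2)>0$. I would enforce a reflection symmetry $\al\mapsto-\al$ fixing $p_0$, so that the leading divergences of $|v|^2(\al_\pm)$ automatically match by parity and the compatibility condition collapses to a single scalar equation in~$\sigma$. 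Solving that equation via the implicit function theorem within the desingularized formulation, using the $\eta$-splash family of Theorem~\ref{T.two} as a seed, yields the positive $\sigma$ claimed in the statement for every sufficiently small~$g$.
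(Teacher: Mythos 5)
Your proposal has a genuine gap at its core: the limiting argument from the $\eta$-splash side cannot produce a splash. Theorem~\ref{T.two} (with $\rho_1=0$) gives you, for each $\eta>0$, a perturbation of a Crapper wave with parameter $A<A_0$; as $\eta\searrow0$ you are simply following the continuous family $\Theta(g,A)$ up to $A=A_0$, so no compactness is really needed --- but the limit curve is the $g$-perturbation of the splash Crapper wave, and there is no reason why it should still touch itself: at fixed $g$ the perturbation can just as well separate the two near-touching arcs (yielding an injective curve) or push them past each other. This is precisely why the paper continues the solution branch \emph{beyond} $A_0$: since for $\rho_1=0$ only the Bernoulli equation $G_1=0$ is in play and it is formulated in terms of $\theta$ on the half-plane rather than on the physical curve, Remark~\ref{rem:1} lets one solve it for all $A\in(-1,1)$, including values for which the Crapper profile self-intersects transversally. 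For each small $g$ one then brackets: the curve for $\Theta(g,A_0-\delta)$ is injective, the one for $\Theta(g,A_0+\delta)$ self-intersects transversally at exactly two points, and an intermediate-value argument in $A$ produces some $A(g)$ at which the interface has exactly a splash. Without this two-sided bracketing (or some substitute for it) your scheme only yields almost-splash solutions, i.e.\ Theorem~\ref{T.two} again rather than Theorem~\ref{T.one}.

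Two further points. First, the ``compatibility condition'' you impose at the splash point is spurious: the Bernoulli relation is a boundary condition on $\partial\Omega_2$ that holds at each boundary point separately (the vacuum pressure being constant), and nothing forces $|v|^2$ or $K$ to agree at the two parameters $\al_\pm$ mapping to the splash point. Moreover the velocity does \emph{not} blow up like $|z-p_0|^{-1/2}$: in the hodograph variables $v$ is the conjugate of $e^{-if}$ with $f=\theta+iH\theta$ bounded, so $|v|$ stays bounded up to and including the splash configuration; what the paper flags as unbounded is the amplitude of the vorticity measure, not $|v|$. Second, where your conformal desingularization is genuinely needed --- to check that a self-touching interface still yields a bona fide solution of \eqref{euler}--\eqref{irrot}, i.e.\ that the hodograph map $z:\Cbb_-\to\Omega$ is invertible --- the map $(z-p_0)^{1/2}$ must be adapted to the periodic, unbounded geometry; the paper uses $P(z)=\sqrt{a-e^{-iz}}$ for exactly this reason (Lemma~\ref{lem:splash}), composing further with $\zeta\mapsto i\log\zeta$ to control the end $y\to-\infty$ and invoking Pommerenke's criterion to conclude conformality. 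So the desingularization enters as a verification step for a splash solution already found, not as the vehicle for finding it.
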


The idea of the proof of this result is to perturb a family of exact
stationary water waves introduced by
Crapper~\cite{crapper}. The idea of perturbing Crapper waves to obtain
more general stationary water waves was introduced by Akers, Ambrose
and Wright in~\cite{AAW} and exploited further
in~\cite{boeck}. Global bifurcation results for traveling waves \`a la
Rabinowitz, also
considering the presence of two fluids, were presented in \cite{ASW}.

In this paper we are interested in the existence of splash or
almost-splash solutions to the Euler equations with two
fluids (of course, this involves ensuring that
these curves remain weak solutions to the equations~\eqref{equations}). To this end we will make use of the fact
that the family of Crapper waves, which depends on a parameter $A$,
includes a splash curve for certain value of the parameter, $A_0$, so
we will try to control the effects of the perturbation in a
neighborhood of this solution. In a perturbation argument, this
involves controlling the perturbations not only of the solutions with
parameter $A_0$, but also with parameters slightly smaller or larger. Notice the connection
between the Crapper solutions with $A>A_0$ and the Euler equations is
uncertain, so in the case of water waves we will eventually resort to the trick
of ``opening up'' the domain using a suitable conformal
transformation. The proofs of Theorems~\ref{T.two} and~\ref{T.one} are
respectively given in Sections~\ref{S.two} and~\ref{S.one}, while in
Section~\ref{sec:1} we reformulate the problem in a way that is most
convenient for our purposes.

\section{Setting up the problem}
\label{sec:1}

\subsection{The vorticity formulation}

Throughout the paper the interface 
$$
\Scal = \{z(\al) = (z_1(\al), z_2(\al)) \,|\, \alpha\in \Rbb\}
$$ 
will be parametrized by a function $z(\al)$ that satisfies the
periodicity conditions  
\begin{equation*}
z_1(\alpha + 2\pi) = z_1(\alpha) + 2\pi, \quad  z_2(\alpha + 2\pi) = z_2(\alpha)
\end{equation*}
and is symmetric with respect to the $y$-axis:
\begin{equation*}
z_1(-\alpha) = -z_1(\alpha), \quad  z_2(-\alpha) = z_2(\alpha).
\end{equation*}
The interface $\Scal$ separates $\Rbb^2$, which we will often identify
with the complex plane $\Cbb$, into two disjoint unbounded domains
$\Omega_j$ ($j=1,2$), each of which is occupied by a fluid of constant
(but distinct) density $\rho_j$. In particular, the interface is given
by the set of discontinuity of the density function      
$$
\rho = \begin{cases}
        \rho_1, \quad z\in \Omega_1,
        \\
        \rho_2, \quad z\in \overline{\Omega}_2 = \Rbb^2\setminus\Omega_1.
       \end{cases}
$$
We assume that the densities satisfy $\rho_2>0$ and $\rho_1\geq 0$. The
fluid flow, governed by the stationary Euler equations  
\be
\label{euler}
\rho (v \cdot \nabla )v = - \nabla p - g \rho \, e_2,
\ee
where $p$ is the pressure and $g$ the gravity constant, is assumed incompressible
\be
\label{incomp}
\nabla \cdot v = 0.
\ee
Both equations are to be understood weakly, since the density $\rho$ is discontinuous. However, on either side of the interface, the velocity is smooth, so restrictions $v^j = v|_{\Omega_j}$ satisfy the above equations strongly in their respective domains. Away from the interface, we assume that
\be
\label{v2inf}
v^2 \rightarrow -1 \quad \mathrm{uniformly \ as} \quad y \rightarrow -\infty
\ee
and
\be
\label{v1inf}
v^1 \rightarrow 0 \quad \mathrm{uniformly \ as} \quad y \rightarrow \infty.
\ee
Here and in what follows, we identify $\Rbb^2$ with the complex plane
whenever it is notationally convenient and use the notation $z=x+i
y$. These conditions guarantee that the mean value of the vorticity
amplitude is also well behaved in the limit $\rho_1 \rightarrow 0$. We
also impose the kinematic boundary condition  
\be
\label{kbc}
v^j \cdot (\pa_\al z)^\perp = 0 \quad \mathrm{on} \quad \Scal,
\ee
and we assume the velocity is irrotational in the interior of each
domain, i.e.,
\be
\label{irrot}
\nabla^\perp \cdot v^j = 0 \quad \mathrm{in} \quad \Omega_j.
\ee
These are the equations~\eqref{equations} mentioned in the
Introduction and the pertinent boundary conditions.

In particular, the vorticity 
$$
\om = \nabla^\perp \cdot v
$$ 
can be seen as a measure supported on $\Scal$. With some abuse of
notation, we will write this
measure as
$$
\om (z) = \om(\al)\delta (z - z(\al)),
$$
so $\om(\al)$ will stand for the amplitude of the vorticity along the interface.
By the incompressibility condition, the velocity can be written as a
normal gradient of a stream function $v = \nabla^\perp \psi$, which
therefore satisfies the equation
$$
\triangle \psi = \om.
$$
Hence away from the interface we have
$$
\psi(z) = \frac{1}{2\pi} \int \ln |z - z(\al)| \, \om(\al)d\al.
$$
This function is harmonic in the interior of each domain and
continuous over the interface. Condition \eqref{kbc} implies it is
actually constant on $\Scal$ so without loss of generality we can
assume that it is zero: $\psi|_{\Scal}=0$. To obtain the velocities $v^j$, we take the normal gradient of $\psi$ in the respective domain. Approaching the interface from inside of each $\Omega_j$ in the normal direction, we have
\begin{align*}
&v^2(z(\al)) = BR(z,\om)(\al) + \frac{1}{2}\frac{\om(\al)}{|\pa_\al z(\al)|^2}\pa_\al z(\al),
\\
&v^1(z(\al)) = BR(z,\om)(\al) - \frac{1}{2}\frac{\om(\al)}{|\pa_\al z(\al)|^2}\pa_\al z(\al),
\end{align*}
where $BR(z,\om)$ denotes the Birkhoff--Rott integral of $\om$ along
the curve $z(\al)$, that is,
$$
BR(z,\om)(\al) = \frac{1}{2\pi} \mathrm{PV} \int \frac{(z(\al) - z(\beta))^\bot}{|z(\al) - z(\beta)|^2}  \, \om(\beta)d\beta.
$$
In particular, the amplitude of the vorticity measures the jump in the tangential component of the velocity along the interface, 
$$
(v^2-v^1)\cdot \pa_\al z  = \om.
$$
It remains to rewrite the Euler equation \eqref{euler} and kinematic
boundary condition \eqref{kbc} in terms of the vorticity amplitude
$\om(\al)$ and the parametrization $z$. The latter now reads 
\be
\label{BR:nor}
BR(z,\om) \cdot \pa^\bot_\al z = 0 \quad \mathrm{on} \quad \Scal,
\ee
while the Euler equation can be integrated to obtain the Bernoulli equation 
$$
\rho_j\Bigg(\frac{1}{2}|v^j|^2 + gy\Bigg) + p_j = \rho_j k_j \quad \mathrm{in} \quad \Omega_j, 
$$
for $k_j$ constant. Let us take $k_1 = k_2 = \frac{1}{2} +
\kappa$. This way, we can understand $\kappa$ as a perturbation of the
constant arising in the pure capillary wave problem (cf.\
\cite{OS}). We assume that on the interface, the pressures are related by     
$$
p_2 = p_1 + \si K,
$$
where $\si> 0$ is the surface tension coefficient and $K(z)$ is the
curvature of the free boundary, i.e., 
$$
K(z) = \frac{\pa_\al z_1 \pa_\al^2 z_2 - \pa_\al z_2 \pa_\al^2 z_1 }{\big[(\pa_\al z_1 )^2 + (\pa_\al z_2)^2\big]^{3/2}}.
$$
Subtracting the two equations, we then obtain 
$$
\frac{1}{|\pa_\al z(\al)|^2} \Bigg(\big(v^2\cdot\pa_\al z\big)^2 + \frac{\rho_1}{\rho_2-\rho_1}\big((v^2 - v^1)\cdot \pa_\al z \big)\big((v^2 + v^1)\cdot \pa_\al z \big) \Bigg) = F(z)
$$
where $F(z)$ is given by 
$$
F(z) = -\frac{2\rho_2}{\rho_2- \rho_1} q K(z) - 2 g z_2 + 1 + 2 \ka
$$
and $q:= \frac{\si}{\rho_2}$. Setting 
$$
\epsilon := \frac{2\rho_1}{\rho_2-\rho_1},
$$ 
and then using the expressions for the velocity on the interface, we have
$$
\big(\om + 2BR(z,\om)\cdot \pa_\al z\big)^2 + 4\epsilon\om \, BR(z,\om)\cdot \pa_\al z = 4|\pa_\al z|^2 F(z).
$$
Finally, collecting all the non-local terms on the right-hand side, we have  
\be
\label{lhs}
\Big((1 + \epsilon)\om + 2BR(z,\om)\cdot \pa_\al z\Big)^2  = 4\,|\pa_\al z|^2 F(z) + \epsilon\,(2 + \epsilon )\,\om^2,
\ee
where, in terms of $\epsilon$ the function $F(z)$ can be written as   
\be
\label{rhs}
F(z) := -(2 + \epsilon) q\, K(z) - 2 g z_2 + 1 + 2\ka.
\ee

\subsection{Choice of parametrization for $z$}
\label{subsec:par}

We still have one degree of freedom left. The equations are only
determined up to a parametrization of the interface. To fix the
parametrization, we use the hodograph transform with respect to the
lower fluid. Since we will only consider the lower fluid, in this
section we omit the subscript two in all the quantities. The idea is
to use the analytic function $w = \phi + i \psi$ as an independent
variable instead of $z = x + iy$. This will allow us to transform the
free boundary problem into an equation on a fixed domain. In fact, as long as $\Scal$ is non-self-intersecting, it can be shown that 
$$
w = \phi + i \psi : \Omega \rightarrow \Cbb_-
$$
is a conformal bijection, which extends to a homeomorphism on $\overline{\Omega}\rightarrow \overline{\Cbb}_-$ (cf.\ \cite{ST}). Here, $\Cbb_-$ denotes the lower half-plane and we have used that
$$
\psi(z(\al)) = 0.
$$
Recall that $\psi$ is constant on the interface by the kinematic
boundary condition \eqref{kbc}. Moreover, the periodicity assumption on the interface and condition \eqref{v2inf} imply 
$$
\aligned
&\psi(x+2\pi, y) = \psi (x,y),
\\
&\phi(x +2\pi, y) = \phi (x, y) + 2\pi,
\endaligned
$$  
and we can actually restrict attention to a period (cf.\ \cite{ST}).
In particular, the inverse mapping 
$$
\phi \mapsto w^{-1}(\phi, 0)
$$ 
gives a useful (although not arc-length) parametrization of the interface $\Scal$. We will
henceforth assume that the curve $z(\al)$ is parametrized in this
manner, which is equivalent to requiring   
%
\be
\label{parametrization}
\phi(z(\al)) = \al.
\ee
Hence we obtain a simple relation for the velocity and tangent vector on the interface:
$$
v^2 \cdot \pa_\al z = \nabla \phi \cdot \pa_\al z = 1.
$$
This translates into a particularly nice expression in terms of the
amplitude of vorticity:
$$
\om + 2BR(z,\om)\cdot \pa_\al z = 2.
$$
Using the above relations and rearranging, equation \eqref{lhs} reads   
$$
2 - 2|\pa_\al z|^2 F(z) = \epsilon\,\om (\om - 2).
$$
We now have a system of three equations in terms of the vorticity and
the parametrization, and the problem can be reformulated as: 
\begin{reform}
Find $2\pi$-periodic functions $\om(\al)$ and $z(\al) - \al$ satisfying 
\begin{subequations}
\label{eq:BR}
\begin{align}
2|\pa_\al z|^2 F(z)+ \epsilon\,\om (\om - 2)&=2, \\
 2BR(z,\om)\cdot \pa_\al z + \om &= 2, \\
 BR(z,\om) \cdot \pa^\bot_\al z &= 0,
\end{align}
\end{subequations}
where $F$ is given by \eqref{rhs}.
\end{reform}
This problem can be simplified further, since the last two equations
of \eqref{eq:BR} have been combined to construct the
parametrization. In fact, we can write the velocity in terms of the
polar coordinates associated with~$w$ as
$$
(\nabla \phi) \circ w^{-1} = e^{ - i f }.
$$
This defines a function
$$
f= \theta + i \tau : \Cbb_- \rightarrow \Cbb
$$
that is analytic in $\Cbb_-$ and continuous in
$\overline{\Cbb}_-$. Note that the boundary conditions ensure
$$
f \rightarrow 0 \quad \mathrm{as} \quad \psi\rightarrow -\infty.
$$ 
Equation \eqref{parametrization} now implies
$$
\pa_\al z = |\pa_\al z|e^{i\theta} = e^{if}.
$$
This has two advantages. First, the expression for curvature takes the simple form   
$$
K = e^\tau \frac{d\theta}{d\alpha}.
$$
Second, as $\theta$ and $\tau$ are a $2\pi$-periodic conjugate
functions, they must be related on $\Scal$ by the periodic Hilbert transform, that is, $\tau(\al)$ can be written as a function of $\theta(\al)$ via   
$$
\tau(\al) = H\theta(\al) = \frac{1}{2\pi}\mathrm{PV}\int_{-\pi}^\pi \cot\frac{\al - \al '}{2}\,\theta(\al') d\al '.
$$
We therefore take $\theta(\al)$ as our main unknown and consider $z$
as a function of $\theta$ via the integral operator 
\be
\label{zoftheta}
z(\alpha) = I(\theta)(\al) := \int_{-\pi}^\al e^{-H\theta(\al ') + i \theta (\al')} d\al'.
\ee
It remains to rewrite $F(z)$ in terms of $\theta$. We have  
$$
F(z) = -2q\Big(1 + \frac{\epsilon}{2}\Big)e^{\tau}\frac{d\theta}{d\al}
- 2 g \, \Imag I(\theta) + 1 + 2\ka,  
$$
so after a short calculation, the first equation in \eqref{eq:BR} reads  
$$
q \Big(1 + \frac{\epsilon}{2}\Big)\frac{d\theta}{d\al} + \sinh \tau =
- g e^{-\tau} \,\Imag I(\theta) + \ka e^{-\tau}  + \frac{e^\tau}{4}\epsilon\,\om (\om - 2). 
$$
Using $\tau = H\theta$, the problem reduces to 

\begin{reform}\label{ProbA}
Find $2\pi$-periodic functions $\theta(\al)$ and $\om(\al)$, such that
$\theta$ is odd, $\om$ is even and they satisfy 
\begin{subequations}
\label{eq:BR1}
\begin{align*}
 q \Big(1 + \frac{\epsilon}{2}\Big)\frac{d\theta}{d\al} + \sinh
   H\theta + g e^{-H\theta}\, \Imag I(\theta) - \ka e^{-H\theta} - \frac{\epsilon}{4}e^{H\theta}\om (\om - 2) &= 0, \\
2BR(z,\om)\cdot \pa_\al z + \om &= 2,
\end{align*}
\end{subequations}
where $z := I(\theta)$ is defined by \eqref{zoftheta}. 
\end{reform}

Note that the integration constant in \eqref{zoftheta} does not affect
the Birkhoff--Rott integral. However, it serves to fix the origin and contributes a constant to the Bernoulli equation.

%

\subsection{Crapper solutions}

The equations that appear in Problem~\ref{ProbA} depend on four
parameters $g, \epsilon, \ka$ and $q$. Recall that
$q$ represents the surface tension, $g$ is the gravity and $\epsilon$
indicates the presence of the upper fluid. Setting $\epsilon$ to zero,
the equations decouple and we recover the capillary-gravity wave
problem as studied in \cite{AAW}. Once a solution $\theta$ of the
Bernoulli equation is known, the vorticity can be recovered from the
second equation as long as the underlying interface is non
self-intersecting (cf.\ Proposition \ref{prop:A} below). If, in
addition, we set $g = 0$, we recover the pure capillary waves problem
as formulated by Levi-Civita (see e.g.\ \cite{OS}), namely,

\begin{reform}\label{ProbC}
Find a $2\pi$-periodic, analytic function  $f = \theta + i \tau$ on the lower half-plane that satisfies 
\be
\label{ref:2}
q \frac{d\theta}{d\al} =  -\sinh H\theta
\ee
on the boundary and tends to zero at infinity. 
\end{reform}

This problem admits a family of exact solutions depending on the parameter $q$. In fact, Crapper has shown \cite{crapper} that the family of analytic functions    
\be
\label{ref:3}
f_A(w) := 2i \log \frac{1 + Ae^{-iw}}{1 - Ae^{-iw}} 
\ee
has all the required properties. Here the parameter $A$, which is a
real number smaller than 1 in absolute value, depends on $q$ via 
$$
\quad q = \frac{1 + A^2}{1-A^2}.
$$
To obtain the wave profiles, recall that by construction we have
$$
\pa_\al z = e^{ - \tau_A  + i \theta_A} = \Big(\frac{1 - Ae^{-i\al}}{1 + Ae^{-i\al}}\Big)^2
$$
on the boundary, so integrating we obtain the parametrization of the interface 
$$
z_A(\alpha) = \alpha + \frac{4i}{1 + Ae^{-i\al}} - 4i.
$$
The constant has been chosen to have $z_A (\al)=\al$ for $A=0$. It
actually suffices to consider $A\geq 0$, since the transformation $A
\mapsto -A$  corresponds to a translation $\al \rightarrow \al +
\pi$. For sufficiently large values of parameter $A$ these solutions
can no longer be represented as a graph of a function, and eventually
self-intersect. It is not hard to see that the curve $z_A$ can be
represented as the graph of a function (that is, as $y = h(x)$) if and
only if
 $$
 A<\sqrt{2} -1,
 $$
and that $z_A(\al)$ does not have self-intersections if and only if
 $$
 A< A_0 \approx 0.45467.
 $$
For $A=A_0$, the curve $z_A(\al)$ exhibits a splash, while for $A$
slightly larger than $A_0$ the curve intersects at exactly two points,
and the intersection is transverse. (What is not clear, of course, is
the connection between curves with $A>A_0$ and the Euler equations.)

\section{Proof of Theorem~\ref{T.two}}
\label{S.two}

In this section, we construct stationary solutions to the Euler
equations with two fluids whose interface is arbitrarily close to a
splash. For this, we use the implicit function theorem to perturb with
respect to the density of the upper fluid (and the gravity) a
Crapper solution that is arbitrarily close to the splash. This
strategy has been employed to perturb Crapper solutions with respect
to gravity in the water wave setting \cite{AAW} and then to deal with
presence of a constant vorticity in a fluid of finite depth \cite{boeck}. 
%
%

Let $H_{\rm even}^s$ and $H_{\rm odd}^s$ respectively denote the subspace
of even and odd functions in the Sobolev space $H^s(\Tbb)$. We define  
\begin{equation}\label{defG}
G= (G_1, G_2): H_{\rm odd}^2 \times H_{\rm even}^1 \times \Rbb^3 \rightarrow H^1_{\rm even} \times H_{\rm even}^1
\end{equation}
to be the differentiable mapping of components
\begin{align*}
G_1(\theta, \om; \,\epsilon, g, \kappa) &:= q \Big(1 +
                                          \frac{\epsilon}{2}\Big)\frac{d\theta}{d\al}
                                          + \sinh H\theta + g
                                          e^{-H\theta} \,\Imag I(\theta) - \frac{\epsilon}{4}e^{H\theta}\om (\om - 2) - \ka e^{- H\theta}
\\
G_2(\theta, \om; \,\epsilon, g, \kappa) &:=  \om + 2BR(z,\om)\cdot \pa_\al z - 2,
\end{align*}
where $z$ is considered to be a function of $\theta$ via $z=I(\theta)$
(cf.~\eqref{zoftheta}).

In order to use the implicit function theorem on Crapper solutions
$\theta_A$ we first need to construct the corresponding vorticity
$\om_A$. The fact that $\om_A$ is well defined is a consequence of the
following proposition, where $1$ stands for the identity operator. For
the proof of this result see e.g.\ \cite{BMO} or \cite{CCG}. 

\begin{proposition}
\label{prop:A}
Let $z\in H^3$ and assume $z$ is a curve without
self-intersections. Then 
$$
\Acal(z)(\omega) = 2BR(z,\om)\cdot \pa_\al z
$$
defines a compact linear operator 
$$
\Acal(z) : H^1\rightarrow H^1
$$ 
whose eigenvalues are strictly smaller than $1$ in absolute value. In particular, the operator $1 + \Acal(z)$ is invertible.  
\end{proposition}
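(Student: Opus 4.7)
The plan is to treat the two assertions of the proposition separately, since compactness and the sharp spectral bound are established by fairly different arguments. Once both are in place, the invertibility of $1+\Acal(z)$ on $H^1$ is immediate from the Fredholm alternative.

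For compactness, the first step is to observe that the integral kernel defining $\Acal(z)$,
\[
K(\al,\beta) = \frac{(z(\al)-z(\beta))^\perp\cdot \pa_\al z(\al)}{\pi\,|z(\al)-z(\beta)|^2},
\]
is in fact regular on $\Tbb^2$ despite the principal value in the definition of the Birkhoff--Rott integral. The cancellation comes from a Taylor expansion at the diagonal: the leading contribution to $z(\al)-z(\beta)$ is proportional to $\pa_\al z(\al)$, which is orthogonal to $(\pa_\al z(\al))^\perp$, so the numerator actually vanishes to second order as $\beta\to\al$, matching the second-order vanishing of the denominator. Non-self-intersection of $z$ ensures that $K$ is bounded away from the diagonal, and the regularity $z\in H^3$ is just what is required for $K$ and its first derivatives to be well-behaved enough that $\Acal(z)$ gains one derivative, mapping $H^1$ continuously into $H^2$. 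Rellich--Kondrachov on the torus then gives compactness on $H^1$.

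For the spectral bound, I would argue by contradiction, supposing that $\Acal(z)\om = \lambda\om$ for some nonzero $\om\in H^1$ with $|\lambda|\ge 1$. The natural object to consider is the stream function
\[
\psi(x) = \frac{1}{2\pi}\int \ln|x - z(\beta)|\,\om(\beta)\,d\beta
\]
(with appropriate periodization), which is harmonic on each $\Omega_j$ and continuous across $\Scal$, and whose associated velocity $v=\na^\perp\psi$ satisfies the jump relations $v^j\cdot \pa_\al z = BR(z,\om)\cdot \pa_\al z \mp \om/2$ recorded in the introduction. The eigenvalue condition therefore translates into
\[
v^1\cdot \pa_\al z = \tfrac{\lambda-1}{2}\om, \qquad v^2\cdot \pa_\al z = \tfrac{\lambda+1}{2}\om.
\]
Applying Green's identity to $\psi$ on each of the two domains and summing yields an energy identity in which $\int_{\Omega_1\cup\Omega_2}|\na\psi|^2$ is related to a quadratic form in $\om$; a careful examination of the signs shows that for $|\lambda|\ge 1$ this identity forces $\om\equiv 0$, contradicting the assumption.

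The main difficulty is this spectral step. Although the eigenvalue argument above is standard for smooth bounded curves, here both $\Omega_1$ and $\Omega_2$ are unbounded strips and one must control the decay of $\psi$ as $y\to\pm\infty$ to make Green's identity rigorous. This suggests splitting $\om$ into its mean value and its zero-mean part: the latter produces a decaying stream function to which the standard argument applies cleanly, while the constant mode must be analyzed directly from the explicit kernel. The verification that the quadratic form arising in the energy identity is strictly positive---which is where non-self-intersection of $z$ plays its essential role---is the technical heart of the argument, and for the full details one can follow the treatments in \cite{BMO} and \cite{CCG}.
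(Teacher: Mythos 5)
The paper offers no proof of this proposition at all---it simply refers the reader to \cite{BMO} and \cite{CCG}---and your outline is precisely the standard argument contained in those references: the second-order near-diagonal cancellation of the Birkhoff--Rott kernel (together with $z\in H^3$ and the chord-arc condition) gives compactness on $H^1$, and the classical double-layer energy identity, via the jump relations $v^1\cdot\pa_\al z=\tfrac{\lambda-1}{2}\om$ and $v^2\cdot\pa_\al z=\tfrac{\lambda+1}{2}\om$, forces $|\lambda|<1$, whence $1+\Acal(z)$ is invertible by the Fredholm alternative. Your sketch is correct and correctly isolates the only delicate point (justifying Green's identity on the unbounded periodic domains by splitting off the mean of $\om$), so it is at least as complete as what the paper provides.
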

We will sometimes denote points in the domain of definition of $G$ by
$$
\xi = (\theta, \om;\, \epsilon, g, \kappa).
$$ 
In particular, for the Crapper solutions we set   
$$
\xi_A = (\theta_A, \om_A;\, 0, 0, 0)
$$ 
so by construction we have 
$$
G(\xi_A) = 0.
$$
It is clear that the map~$G$, defined in~\eqref{defG}, is
differentiable and that its Fr\'{e}chet derivative with respect to $(\theta, \om)$ has the simple triangular form  
\[ 
  D_{\theta, \om}G(\xi_A) = \left(
			  \begin{array}{cc}
                           \Ga & 0 \\
			   D_\theta G_2	& D_\om G_2   
                          \end{array}
		    \right)\,,
\]
where $\Ga$ is just the Fr\'{e}chet derivative of the pure capillary wave operator \eqref{ref:2},
$$
\Ga u = q \frac{du}{d\si} + \cosh(H\theta_A)\, Hu,
$$
and $(D_\om G_2)T$ is exactly 
$$
(D_\om G_2)T = (1 + \Acal(z_A))T 
$$
since $G_2$ is linear in $\om$. By Proposition \ref{prop:A}, we
already know that $D_\om G_2$ is invertible on $H^1_{\rm even}$ as
long as $z_A$ does not self-intersect. The structure of $\Ga$ is
already known as well. In fact, we have the
following result, which we borrow from \cite{AAW}  and whose proof we sketch here for completeness:

\begin{lemma}
\label{lem:Ga}
The operator $\Ga:H^1_{\rm odd} \rightarrow L^2_{\rm even}$ is
injective, but not surjective. Its cokernel is spanned by the function
$\cos\theta_A$, which is orthogonal to the image of~$\Ga$.
\end{lemma}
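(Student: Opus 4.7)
The plan is to combine Fredholm theory with an explicit verification that $\cos\theta_A$ lies in the cokernel. First I would decompose $\Gamma = \Gamma_0 + K$, where $\Gamma_0 u := q u' + Hu$ is the constant-coefficient principal part and $K u := (\cosh\tau_A - 1)\, Hu$ is a compact perturbation (using boundedness of $H$ on $L^2$, boundedness of multiplication by the smooth function $\cosh\tau_A - 1$, and compactness of the embedding $H^1\hookrightarrow L^2$ on $\Tbb$). The principal part diagonalizes in the Fourier basis via $\Gamma_0\sin(n\alpha) = (qn-1)\cos(n\alpha)$. Since $A\in(0,A_0)$ gives $q = (1+A^2)/(1-A^2)>1$, every factor $qn-1$ is strictly positive for $n\geq 1$, so $\Gamma_0 \colon H^1_{\rm odd}\to L^2_{\rm even}$ is injective with image equal to the codimension-one subspace of mean-zero even functions. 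Thus $\Gamma_0$ has Fredholm index $-1$, and stability of the index under compact perturbations gives $\mathrm{ind}\,\Gamma = -1$.

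To exhibit one cokernel element I would verify $\Gamma^*(\cos\theta_A) = 0$, where the formal adjoint is $\Gamma^* v = -q v' + H(\cosh\tau_A\, v)$. The Crapper equation $q\theta_A' = -\sinh\tau_A$ yields $-q(\cos\theta_A)' = q\sin\theta_A\cdot\theta_A' = -\sin\theta_A\sinh\tau_A$. For the nonlocal term, $\cos f_A(w)$ is analytic wherever $f_A$ is, with boundary value $\cos\theta_A\cosh\tau_A - i\sin\theta_A\sinh\tau_A$, so the conjugate-function identity for the boundary trace of an analytic function in the appropriate half-plane gives $H(\cos\theta_A\cosh\tau_A) = \sin\theta_A\sinh\tau_A$. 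Adding the two contributions yields $\Gamma^*\cos\theta_A = 0$, so $\mathrm{span}\{\cos\theta_A\}$ sits inside the cokernel.

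The hard part will be injectivity of $\Gamma$ on $H^1_{\rm odd}$; once it is established, the Fredholm identity $\dim\mathrm{coker}\,\Gamma = \dim\ker\Gamma - \mathrm{ind}\,\Gamma = 1$ forces the cokernel to coincide with $\mathrm{span}\{\cos\theta_A\}$. My approach would rely on the rigidity of the Crapper family. Translation gives the even direction $\theta_A'\notin H^1_{\rm odd}$, while the amplitude parameter satisfies $\Gamma(\partial_A\theta_A) = -q'(A)\theta_A'\neq 0$, so neither produces an odd kernel element. A nonzero element of $\ker\Gamma\cap H^1_{\rm odd}$ would therefore provide a tangent direction to the space of odd solutions of the Levi-Civita equation at $\theta_A$ not accounted for by the Crapper curve; I would rule this out through a Lyapunov--Schmidt reduction starting from $A = 0$ (where $\ker\Gamma_0\cap H^1_{\rm odd} = \mathrm{span}\{\sin\alpha\}$ and the first non-vanishing obstruction at order $A^2$ carries a $\cos\alpha$-component outside the image of $\Gamma_0$), combined with a continuation argument along $(0, A_0)$.
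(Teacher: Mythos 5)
Your overall architecture matches the paper's: split $\Ga$ into a Fredholm part of index $-1$ plus a compact perturbation, prove injectivity on $H^1_{\rm odd}$, exhibit $\cos\theta_A$ as a cokernel element, and conclude by counting dimensions. Your Fourier-side splitting $\Ga_0 u = qu'+Hu$ is a legitimate variant of the paper's simpler choice $\Ga_1 u = qu'$ (with all of $\cosh(H\theta_A)Hu$ absorbed into the compact part), and your verification that $\cos\theta_A$ annihilates the range---computing $\Ga^*\cos\theta_A$ from the Crapper equation and the conjugate-function identity for the boundary trace of $\cos f_A$---is an acceptable alternative to the paper's cleaner device, namely the identity $\int_{-\pi}^{\pi}\bigl[q\theta'+\sinh H\theta\bigr]\cos\theta\,d\al=0$ valid for \emph{every} $\theta$, which is then differentiated at $\theta_A$. (Do fix your signs: with $H$ anti-self-adjoint the formal adjoint is $\Ga^*v=-qv'-H(\cosh\tau_A\,v)$, and the conjugate-function identity then gives $H(\cos\theta_A\cosh\tau_A)=-\sin\theta_A\sinh\tau_A$; your two sign slips happen to cancel.)

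The genuine gap is the injectivity step, which you yourself flag as ``the hard part'' and then only sketch. Two problems. First, the rigidity argument is logically unsound as stated: a nonzero element of $\ker\Ga\cap H^1_{\rm odd}$ is \emph{not} automatically ``a tangent direction to the space of odd solutions of the Levi--Civita equation,'' because $\Ga$ has index $-1$ and is not surjective, so the implicit function theorem does not convert kernel elements into actual solution curves; hence the fact that the Crapper family only supplies the even direction $\theta_A'$ and the direction $\pa_A\theta_A$ excludes nothing. Second, the proposed Lyapunov--Schmidt analysis at $A=0$ only controls the kernel for small $A$, while the lemma is needed up to $A_0\approx 0.45$; the ``continuation argument along $(0,A_0)$'' would have to rule out an odd kernel element appearing at some intermediate $A$, which is precisely the spectral non-degeneracy you are trying to prove, so the argument is circular as it stands. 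The paper closes this step by invoking the known result (Okamoto--Shoji) that zero is an eigenvalue of $\Ga:H^1\to L^2$ of geometric multiplicity one with eigenfunction $d\theta_A/d\al$, which is even and therefore invisible to $H^1_{\rm odd}$; this rests on the explicit form of the Crapper solutions rather than on perturbation from $A=0$. Without that input (or a complete substitute for it), your proof of the lemma is incomplete.
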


\begin{proof}
First note that $\Ga$ is a compact perturbation of Fredholm operator
of index $-1$, and as such it is another Fredholm operator of index
$-1$ itself. In order to see this, notice that $\Ga$ can be written as a sum 
\begin{equation*} 
\Ga=\Ga_1+K_1,\qquad
  \Ga_1 u:= q \frac{du}{d\si},
\qquad
  K_1 u:=  \cosh(H\theta_A)\, Hu,
\end{equation*}
where $\Ga_1$ is Fredholm of index $-1$ and $K_1$ is compact. Indeed,
$\Ga_1$ is injective on $H^1_{\rm odd}$, since constants are the only
trivial solutions. On the other hand, the dimension of the cokernel is
one, since $\Ga_1T = f$ has a periodic odd solution if and only if $f$
is even and of zero mean ($\int_{-\pi}^{\pi}f(\si)d\si = 0$). 
The operator $K_1$ is compact because the embedding $H^1_{\rm odd} \hookrightarrow L^2$ is compact by the Rellich--Kondrachov theorem.

$\Ga$ is injective when restricted to $H^1_{\rm odd}$, since zero is an eigenvalue of $\Ga:H^1\rightarrow L^2$ with geometric multiplicity one (cf.\ \cite{OS}). The only eigenfunction is given by $d\theta_A/d\si$, which is even.

Hence the cokernel is at most one-dimensional. To show that it is
spanned by $\cos\theta_A$, note that for any function $\theta$ we
have, with $\ep=g=0$,
\be\label{ident}
\int_{-\pi}^{\pi} G_1(\xi) \cos \theta \,d\al = \int_{-\pi}^{\pi} \Big[q \frac{d\theta}{d\al} + \sinh H\theta\Big] \cos \theta \,d\al = 0 
\ee
since the first term is just the integral of the derivative of
$q\,\sin \theta$, so it integrates to zero. To show that we also have
$\int_{-\pi}^\pi \sinh H\theta\, \cos \theta\, d\al=0$, it suffices to
employ the identity  
$$
\int_{-\pi}^{\pi} e^{\pm H\theta(\al) + i\theta(\al)}d\al = 2\pi,
$$
which follows from the Cauchy integral theorem. 

Hence taking the derivative with respect to $\theta$ at $\theta_A$ in
the identity~\eqref{ident} we obtain
$$
0 = \int_{-\pi}^{\pi} \Ga u\, \cos\theta_A \,d\al + \int_{-\pi}^{\pi}\Big[q \frac{d\theta_A}{d\al} + \sinh H\theta_A\Big]  \sin\theta_A \,u \,d\al.
$$
The expression in square brackets zero by construction, so we obtain
$$
\langle\Ga u, \cos\theta_A\rangle = 0
$$
for all~$u$, where the angle brackets denote the $L^2$ product.
\end{proof}

In view of the above, we obtain the following result:

\begin{proposition}
\label{FrDer}
Let $A < A_0$. Then the Fr\'{e}chet derivative of $G$ with respect to $(\theta, \om)$ at $\xi_A$ is injective, i.e.,
$$
\ker D_{\theta, \om}G =\{0\}.
$$
On the other hand, the equation 
\[ 
                    \left(
			  \begin{array}{cc}
                           \Ga & 0 \\
			   D_\theta G_2	& D_\om G_2   
                          \end{array}
		    \right)
		    \begin{pmatrix}
                            u \\
			    T   
                    \end{pmatrix} 
                    = 
			  \begin{pmatrix}
                           f \\
			    g   
                          \end{pmatrix}
\]
has a solution if and only if 
$$
\langle f, \cos\theta_A \rangle= 0.
$$ 
\end{proposition}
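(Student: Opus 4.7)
The plan is to exploit the triangular structure of $D_{\theta,\om}G(\xi_A)$, reducing everything to the two ingredients already established: the analysis of $\Ga$ from Lemma~\ref{lem:Ga} and the invertibility of $1+\Acal(z_A)$ from Proposition~\ref{prop:A}. Since $A<A_0$, the Crapper curve $z_A$ has no self-intersections, so Proposition~\ref{prop:A} guarantees that $D_\om G_2 = 1+\Acal(z_A)$ is invertible on $H^1_{\rm even}$. This is the key standing fact that makes the lower-right block harmless.

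\textbf{Injectivity.} Suppose $(u,T)\in H^2_{\rm odd}\times H^1_{\rm even}$ lies in $\ker D_{\theta,\om}G(\xi_A)$. The first row of the block matrix gives $\Ga u=0$, and Lemma~\ref{lem:Ga} asserts that $\Ga$ is injective on $H^1_{\rm odd}$ (hence certainly on $H^2_{\rm odd}$), forcing $u=0$. The second row then reduces to $(D_\om G_2)T=0$, and invertibility of $D_\om G_2$ gives $T=0$.

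\textbf{Solvability.} To solve
\[
\Ga u = f,\qquad (D_\theta G_2)u + (D_\om G_2) T = g,
\]
I would first analyze the top equation. By Lemma~\ref{lem:Ga}, $\Ga:H^2_{\rm odd}\to H^1_{\rm even}$ is Fredholm of index $-1$ with one-dimensional cokernel spanned by $\cos\theta_A$, so the first equation admits a solution $u\in H^2_{\rm odd}$ precisely when $\langle f,\cos\theta_A\rangle=0$. Assuming this compatibility, fix any such $u$. Then the second equation becomes $(D_\om G_2)T = g - (D_\theta G_2)u$, and since $D_\om G_2$ is a bounded invertible operator on $H^1_{\rm even}$, we obtain the unique $T=(D_\om G_2)^{-1}\bigl(g-(D_\theta G_2)u\bigr)$. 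Conversely, if a solution exists, taking the $L^2$ pairing of the first equation with $\cos\theta_A$ and using $\langle \Ga u,\cos\theta_A\rangle=0$ from Lemma~\ref{lem:Ga} forces $\langle f,\cos\theta_A\rangle=0$.

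There is essentially no hard step here; the only thing to be slightly careful about is verifying that $D_\theta G_2$ indeed maps $H^2_{\rm odd}$ into $H^1_{\rm even}$ (so that $g-(D_\theta G_2)u$ lies in the domain of $(D_\om G_2)^{-1}$), which follows from the parity structure built into the operator $G_2$ together with the oddness of $\theta_A$. The entire content of the proposition is thus a direct corollary of the block-triangular form of $D_{\theta,\om}G(\xi_A)$ combined with the two earlier results.
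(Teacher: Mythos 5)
Your proposal is correct and follows essentially the same route as the paper: injectivity from the block-triangular form together with the injectivity of $\Ga$ (Lemma~\ref{lem:Ga}) and of $D_\om G_2$ (Proposition~\ref{prop:A}), and solvability by first solving $\Ga u=f$ under the compatibility condition $\langle f,\cos\theta_A\rangle=0$ and then inverting $D_\om G_2$ in the second row. The only difference is that you spell out the necessity direction and the mapping-property check for $D_\theta G_2$, which the paper leaves implicit.
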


\begin{proof}
 $D_{\theta, \om}G$ is injective since both $\Ga$ and $D_{\om}G_2$
 are. On the other hand, if 
 $\langle f, \cos \theta_A\rangle = 0$, then there exists $u$ such that 
 $$
 \Ga u = f,
 $$
 by Lemma \ref{lem:Ga}. Since $D_{\om}G_2$ is invertible by Lemma \ref{prop:A}, there exists $T$ such that 
 $$
 (D_{\om}G_2) T = g - (D_{\theta}G_2) u,
 $$
 so the claim follows.
\end{proof}

Since $D_{\theta, \om}G$ is not surjective, we cannot use the implicit
function theorem directly to prove Theorem~\ref{T.two} (the remaining hypothesis of the 
implicit function theorem are easy to check). Therefore,
following \cite{AAW}, we use an adaptation of the Lyapunov--Schmidt
reduction argument. So let $\Pi$ denote the $L^2$ projector onto the
linear span of $\cos \theta_A$, i.e.,
$$º
\Pi u := \langle \cos\theta_A, u \rangle \frac{\cos\theta_A}{\|\cos\theta_A\|^2_2}. 
$$
Then $Q : = 1 - \Pi$ is the projector on $\Ga(H^2_{\rm odd})$ and $\t G = (Q G_1, G_2)$ is a mapping
$$
\t G : H^2_{\rm odd} \times H^1_{\rm even} \times \Rbb^3 \rightarrow \Ga(H^2_{\rm odd}) \times L^2,
$$
whose Fr\'{e}chet derivative with respect to $(\theta, \om)$ at
$\xi_A$ is an isomorphism by construction. We can then apply the
implicit function theorem on $\t G$ to obtain a smooth function $\Theta_A(\epsilon, g, \kappa)$ satisfying $\Theta_A(0, 0, 0) = (\theta_A, \om_A)$ and
$$
\t G(\Theta_A(\epsilon, g, \kappa); \, \epsilon, g, \kappa) = 0
$$
for all $(\ep,g,\kappa)$ in a small neighbourhood of $(0, 0, 0)$. This
function does not necessarily satisfy the original equation $G = 0$,
since the projection $\Pi G_1$ on $\cos\theta_A$ is not necessarily
zero. So consider  the differentiable real-valued function defined in a neighborhood
of the origin $(0,0,0)\in\Rbb^3$ as
$$
f(\epsilon, g;\, \kappa) = \langle \cos\theta_A,\, G_1(\Theta_A(\epsilon, g, \kappa); \epsilon, g, \kappa )\rangle
$$
and note that $f(0,0;0) = 0$. For the derivative at $(0,0,0)$, we have 
$$
\pa_\kappa f(0,0; 0) = \langle\cos\theta_A,\, \Ga \pa_\kappa \Theta_A + \pa_\kappa G_1  \rangle = -2\pi,
$$
where we have used that the first term is zero by Lemma~\ref{lem:Ga} and
that the second gives
$$
\langle \cos \theta_A, \, \pa_\kappa G_1\rangle = \int_{-\pi}^\pi e^{-H\theta_A(\si)}\cos\theta_A(\si) d\si = -2\pi
$$
by the Cauchy integral theorem. We can apply the implicit function
theorem once again to obtain a smooth function $\kappa_*(\epsilon, g)$
satisfying $f(\epsilon, g; \, \kappa_*(\epsilon, g)) = 0$ and
$\kappa_*(0, 0) = 0$. Therefore, denoting by $B_r$ the interval of reals of
absolute value smaller than~$r$, we have proved the following

\begin{theorem}
\label{main:thm}
Let $A < A_0$. Then there exist $\epsilon_A>0$, $g_A>0$, $\kappa_A>0$,
a unique smooth function 
$$
\kappa_* : B_{\epsilon_A} \times B_{g_A}  \rightarrow B_{\kappa_A}  
$$
such that $\kappa_*(0,0) = 0$ and a unique smooth function 
$$
\Theta_A : B_{\epsilon_A} \times B_{g_A} \times B_{\kappa_A}  \rightarrow H^2_{\rm odd}\times H^1_{\rm even} 
$$
such that $\Theta_A(0,0,0) = (\theta_A, \om_A)$ and    
$$
G\big(\Theta_A(\epsilon, g, \ka_*(\epsilon, g));\, \epsilon, g, \ka_*(\epsilon, g)\big) = 0.
$$
\end{theorem}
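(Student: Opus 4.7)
The plan is to carry out the Lyapunov--Schmidt reduction described in the preceding paragraphs by applying the implicit function theorem twice. First, I would focus on the reduced map
\[
\t G = (Q G_1, G_2) : H^2_{\rm odd}\times H^1_{\rm even}\times\Rbb^3 \to \Ga(H^2_{\rm odd})\times L^2_{\rm even},
\]
where $Q = 1-\Pi$ is the $L^2$ projector onto the complement of the span of $\cos\theta_A$. The Crapper pair $\xi_A = (\theta_A,\om_A;0,0,0)$ satisfies $\t G(\xi_A) = 0$, and by Proposition~\ref{FrDer} together with Lemma~\ref{lem:Ga}, the Fr\'echet derivative $D_{\theta,\om}\t G(\xi_A)$ has the triangular form with diagonal entries $Q\Ga$ and $I+\Acal(z_A)$, both of which are isomorphisms onto their target spaces (the first by the cokernel identification in Lemma~\ref{lem:Ga}, the second by Proposition~\ref{prop:A} since $A<A_0$ ensures $z_A$ has no self-intersections). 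A standard application of the implicit function theorem then produces a smooth map $\Theta_A = (\theta_A^*, \om_A^*)$ on a neighborhood $B_{\ep_A}(0)\times B_{g_A}(0)\times B_{\ka_A}(0)$ of the origin in $\Rbb^3$, with $\Theta_A(0,0,0) = (\theta_A,\om_A)$ and $\t G(\Theta_A;\ep,g,\ka) = 0$.

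Next, I would pass to the one-dimensional obstruction
\[
f(\ep,g;\ka) := \bigl\langle \cos\theta_A,\, G_1(\Theta_A(\ep,g,\ka);\ep,g,\ka)\bigr\rangle,
\]
which is precisely the part of $G$ not killed by $\t G$. Since $G_1(\xi_A) = 0$ one has $f(0,0;0) = 0$, and the key transversality computation is that $\pa_\ka f(0,0;0) = -2\pi \neq 0$. This follows from differentiating: the contribution $\langle\cos\theta_A,\Ga\,\pa_\ka\Theta_A\rangle$ vanishes by the cokernel characterization (Lemma~\ref{lem:Ga}), while the direct dependence of $G_1$ on $\ka$ gives $-\int_{-\pi}^{\pi} e^{-H\theta_A}\cos\theta_A\,d\al$, which equals $-2\pi$ by the Cauchy integral theorem applied to the boundary value $e^{-H\theta_A+i\theta_A}$ of an analytic function on~$\Cbb_-$ that tends to $1$ at infinity.

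Having this nondegeneracy, a second application of the implicit function theorem to $f$ in the variable $\ka$ yields a smooth map $\ka_*(\ep,g)$ on a possibly smaller neighborhood with $\ka_*(0,0) = 0$ and $f(\ep,g;\ka_*(\ep,g)) = 0$. Combining, both equations $QG_1 = 0, \Pi G_1 = 0$ and $G_2 = 0$ hold simultaneously at $(\Theta_A(\ep,g,\ka_*(\ep,g));\ep,g,\ka_*(\ep,g))$, which is the claimed statement. The only delicate point is the transversality condition $\pa_\ka f(0,0;0) \neq 0$: one must be careful to check that the term coming from differentiating $\Theta_A$ genuinely lies in $\Ga(H^2_{\rm odd})$ so that it is annihilated by the pairing with $\cos\theta_A$, and that the remaining boundary integral evaluates explicitly via Cauchy's theorem; everything else is a verification that $G_1$ and $G_2$ are smooth maps between the stated Sobolev spaces, which follows from standard estimates on the Birkhoff--Rott integral and on the Hilbert transform together with the smoothness of the Crapper profile.
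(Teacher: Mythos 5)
Your proposal is correct and follows essentially the same route as the paper: the same Lyapunov--Schmidt reduction with the projector $Q=1-\Pi$ onto the complement of $\cos\theta_A$, the same two applications of the implicit function theorem, and the same computation $\pa_\ka f(0,0;0)=-2\pi$ via the cokernel identity of Lemma~\ref{lem:Ga} and the Cauchy integral theorem. No substantive differences to report.
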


\begin{remark}
\label{rem:1}
The restriction $A<A_0$ is necessary if the upper fluid is present,
since absolute values of eigenvalues of $\Acal(z)$ approach $1$ as the
arc-chord condition fails (cf.\ \cite{CCG}). If the upper fluid is
neglected, the equations \eqref{eq:BR1} decouple and we recover the
capillary-gravity waves problem as studied in \cite{AAW}. In
particular, if we were only concerned with the function $G_1$ (as
opposed to $G=(G_1,G_2)$), the proof would go through for all $A\in (-1, 1)$.
\end{remark}

\begin{remark}
Note that we could have proved an analogous result with the
function~$G$ defined on $H^{k+1}_{\rm odd}\times H^k_{\rm even}$, with
any $k\geq1$.
\end{remark}

Theorem~\ref{T.two} is a straightforward consequence of
Theorem~\ref{main:thm}. Indeed, the latter ensures that, given small
enough $\epsilon $ and $g$ and assuming that the corresponding Crapper
interface $z_A$ does not have any self-intersections, there is a
solution of the equations with the aforementioned values of~$\epsilon$
and~$g$ that depends smoothly on these parameters. Choosing the Crapper
interface $z_A$ to be an $\eta$-splash curve, by continuity we infer that for small
$\epsilon$ and $g$ the resulting interface $z$ must be an $\eta'$-splash
curve, with $\eta'-\eta$ tending to zero as $|\ep|+|g|\to0$. The
result then follows.

\section{Proof of Theorem~\ref{T.one}}
\label{S.one}

In this section, we only consider capillary-gravity water waves, so we
assume throughout that $\epsilon = 0$. The problem is then governed by
the Bernoulli equation alone, and reads: 

\begin{reform}
\label{capgrav}
Find a $2\pi$-periodic, analytic function $f = \theta + i \tau$ defined on the lower half-plane that satisfies 
\be
\label{eq:brn}
q\frac{d\theta}{d\al} + \sinh H\theta + g e^{-H\theta} \Imag I(\theta) - \ka e^{- H\theta} = 0
\ee
on the boundary and tends to zero at infinity. 
\end{reform}
We prove that, in this case, there exist solutions of
Problem~\ref{capgrav} that exhibit splash singularities for all
sufficiently small values of $g$. Moreover, we show that any such
solution defines a solution of \eqref{euler}--\eqref{irrot}. For that
matter, assume we found an $f$ as required in Problem~\ref{capgrav}. Then we obtain a candidate solution $z$ setting     
$$
\frac{dz}{dw} := e^{if}
$$
(cf.\ section \ref{subsec:par}). Note that the periodicity of $f$ and
the Cauchy integral theorem imply that $z:\Cbb_- \to \Omega$ has the form  
$$
z(w)= w + h(w), 
$$ 
where $h$ is $2\pi$-periodic (meaning that $h(w+2\pi)=h(w)$ for all
complex~$w$). The boundary condition at infinity then implies that $h$
tends to some finite limit as $\Imag{w}\rightarrow -\infty$. This
mapping defines a solution of \eqref{euler}--\eqref{irrot}, if it can
be inverted. In this case, its inverse $w= \phi + i \psi$ defines the
complex potential and the complex conjugate of its derivative $dw/dz$
the velocity. If fact, we have the following result:

\begin{lemma}
\label{lem:splash}
 Let $z:\Cbb_-\rightarrow \Omega$ be a solution of the
 capillary-gravity wave problem (see Problem~\ref{capgrav}) such that the curve 
 $$
 \Scal = \{z(\al)\,|\,\al\in\Rbb\}
 $$
 either does not self-intersect or has a splash singularity. Then $z$ is invertible and its inverse
 $$
 w = \phi + i \psi:\Omega \rightarrow \Cbb_-
 $$
 is such that $v = \nabla \phi$ satisfies \eqref{euler}--\eqref{irrot}.
\end{lemma}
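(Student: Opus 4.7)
The proof splits naturally into two parts: showing that $z:\Cbb_-\to\Omega$ is a bijection, and then checking that its inverse yields a velocity field satisfying \eqref{euler}--\eqref{irrot}. The starting point is the observation that, by construction, $dz/dw=e^{if}=e^{-H\theta+i\theta}$ is holomorphic on $\Cbb_-$, continuous, bounded, and bounded away from zero on the closure, and tends to $1$ as $\Imag w\to-\infty$. Hence $z$ is everywhere a local biholomorphism, with $z(w)=w+h(w)$ for $h$ periodic and bounded.

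For global invertibility, my plan is to use a winding-number argument on the quotient cylinder $\Cbb_-/2\pi\mathbb{Z}$, on which $z$ descends to a well-defined map $\tilde z$ because $z(w+2\pi)=z(w)+2\pi$. For any $p\in\Omega$, which by hypothesis lies off the curve $\Scal$ (and in particular away from the splash point, if one is present), the number of preimages of $p$ under $\tilde z$ is computed by applying the argument principle on the boundary of the cylindrical strip $\{-R\le\Imag w\le 0\}$: the lower circle contributes $0$ as $R\to\infty$ since $\tilde z(w)\sim w$ there, while the upper circle contributes the winding number of $\Scal$, viewed as a closed curve on the target cylinder, about $p$. In the non-self-intersecting case $\Scal$ is a Jordan curve on the cylinder and this winding number is $1$. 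In the splash case, the hypothesis that the singularity is a \emph{splash} (a single transverse self-tangency, not a splat or something more degenerate) means that the restriction of $\tilde z$ to the boundary circle is an immersion with exactly one transverse double point; the complement of its image in the target cylinder still has a unique unbounded component of winding number $1$, which must coincide with $\Omega$. In either case, each $p\in\Omega$ has exactly one preimage, so $z:\Cbb_-\to\Omega$ is a biholomorphism.

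Once invertibility is established, $w=\phi+i\psi:=z^{-1}$ is holomorphic on $\Omega$, so $\phi$ and $\psi$ are conjugate harmonic. The velocity $v:=\nabla\phi$ is therefore automatically divergence-free and curl-free, which gives \eqref{incomp} and \eqref{irrot}. The kinematic condition \eqref{kbc} follows from $\psi\equiv 0$ on $\Scal$, which is built into the hodograph parametrization of Subsection~\ref{subsec:par}. The far-field condition \eqref{v2inf} comes from $f\to 0$ as $\Imag w\to-\infty$, which fixes the asymptotic behavior of $\nabla\phi$ in the conventions set up in Section~\ref{sec:1}. Finally, \eqref{euler} is, for an incompressible irrotational flow, equivalent to a Bernoulli relation in $\Omega$ which, combined with the Young--Laplace jump $p_2-p_1=-\sigma K$ on $\Scal$, is exactly what \eqref{eq:brn} encodes for $\epsilon=0$; reversing the derivation of Section~\ref{sec:1} recovers \eqref{euler} verbatim.

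The main obstacle is the invertibility claim in the splash case. Making the winding-number count rigorous requires care with the topological picture of a self-intersecting boundary and the identification of $\Omega$ as the correct component of $\Cbb\setminus\Scal$; here the precise definition of the splash singularity (a single transverse self-tangency) is essential, since it guarantees that $\Omega$ has the same global structure as in the non-self-intersecting case up to identifying one pair of boundary points. Everything else reduces either to the Cauchy--Riemann equations or to identities already derived in Section~\ref{sec:1}.
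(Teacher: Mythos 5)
Your overall strategy --- a direct degree/winding-number count for $z$ on the quotient cylinder --- is genuinely different from the paper's, and the second half of your argument (recovering \eqref{euler}--\eqref{irrot} from the Cauchy--Riemann equations, the hodograph identities and the Bernoulli relation once invertibility is known) is fine; the paper barely comments on that part and, for the non-self-intersecting case, simply cites the literature. For the splash case the paper instead composes with the conformal map $P(z)=\sqrt{a-e^{-iz}}$, whose branch cut is arranged to pass through the splash point: the square root separates the two arcs of $\Scal$ that touch there, so the transformed boundary becomes a Jordan curve and invertibility follows from the Darboux-type criterion (analytic on $\Dbb$, continuous on $\overline{\Dbb}$, injective on $\pa\Dbb$ implies conformal bijection onto the Jordan interior) quoted from Pommerenke. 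The entire point of that construction is to avoid the step that your write-up leaves unproved.

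That step --- the winding-number count when $\Scal$ is not simple --- is where your proposal has a genuine gap. First, a splash point is not a ``transverse double point'': it is the tangential self-contact obtained when the two transverse crossings present for $A>A_0$ merge at $A=A_0$ (``transverse self-tangency'' is a contradiction in terms), and this matters because the component structure of $\Cbb\setminus\Scal$ and the winding numbers of the components depend on whether the two arcs cross or merely touch. Second, the assertion that the complement ``still has a unique unbounded component of winding number $1$, which must coincide with $\Omega$'' is precisely what has to be proved; no general principle of this kind holds for non-simple closed curves. Concretely, at the splash the complement of $\Scal$ has three components: the upper region, the lower region containing $-i\infty$, and the trapped bubble. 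You must show (i) that the winding number --- computed, say, for $Z=e^{-iz}$ on the unit disc, which is the correct way to make the ``argument principle on the cylinder'' rigorous, since $z$ itself does not descend to the cylinder and the vertical sides of a period rectangle do not cancel --- equals $1$ on the lower component, and (ii) that it equals $0$, not $2$, on the bubble; in the latter case $z$ would be two-to-one over the bubble, hence not injective, and its image would strictly contain $\Omega$. Neither computation appears in your argument. Point (i) can be repaired by connectedness of the lower component together with the expansion $Z(\zeta)=c\zeta+O(\zeta^2)$ at $\zeta=0$, but point (ii) requires a genuine local analysis of the orientation of the two arcs at the tangency (or some substitute, e.g.\ a Hurwitz-type limiting argument from the embedded curves with nearby parameters), and without it the injectivity of $z$ and the identification $z(\Cbb_-)=\Omega$ are not established.
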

\begin{proof}
When the curve does not have any self-intersections, the result is
proved in \cite{BDT}, so let us assume that $\Scal$ is a splash curve. 

%
We can restrict our attention to $\Omega\cap\{|\Real z|<\pi\}$, which we will denote $\Omega_p$.
Let $P:\Omega_p\to \Cbb$ denote the conformal mapping   
\begin{align*}
P(z) = \sqrt{a - e^{-iz}} ,  
\end{align*}
where the branch cut of the complex square root was taken along the negative real axis. The constant $a\geq 0$ is chosen in such a way that the branch cut and $(0,0)$ lie in the vacuum region, with $(0,0)$ lying inside the bubble in the case of splash singularity.

The exponential function maps the interface $\Scal$ to a closed curve
and the vertical lines $(\pm\pi, y)$ to the positive real axis, with
$-\infty$ being mapped to $(0, 0)$. By symmetry, the point where the
splash singularity occurs has to be on the branch cut and neighboring
points will therefore be mapped to different regions by the square
root. More precisely, if $z_0$ is the splash point and $B$ is a small
ball centered at $z_0$, $P(B\cap\Omega_p)$ will consist of two connected
components with disjoint closures.

Let us set 
$$
\t \Scal := P(\Scal).
$$ 
The transformed interface $\t \Scal$ is a closed Jordan curve in the
right half-plane touching the imaginary axis at exactly two
points. Denoting by $\t\Omega$ the bounded connected component of $
\Cbb\setminus \t\Scal$, we see that the restriction  
$$
P: \Omega_p \rightarrow \t\Omega\setminus [\sqrt{a}, \infty)
$$
is a conformal bijection. Note, however, that 
$$
\lim_{z \rightarrow -\infty} \frac{dP}{dz} = 0,
$$
so a priori $P$ cannot be extended to a conformal mapping on
$\t\Omega$. 

To avoid this difficulty, we map the half-strip
$$
S := \Cbb_-\cap  \{|\Real w| <\pi\}.
$$ 
to the unit disc $\Dbb$ cut along the negative real axis via the
function $w : \Dbb\setminus (-1, 0]  \to  S$ defined by
\[
w(\zeta) := i\log \zeta.
\]
Note that derivative of $w$ with respect to $\zeta$ becomes unbounded as $\zeta\rightarrow 0$. However, the composed mapping 
$$
P\circ z \circ w: \Dbb\setminus (-1, 0] \rightarrow \t\Omega\setminus [\sqrt{a}, \infty) 
$$ 
can be extended to a mapping analytic on all of $\Dbb$. In fact, by
periodicity it is continuous over the cut and maps $0$ to
$\sqrt{a}$. Moreover, an elementary computation using the precise
expression for~$z$ shows that the derivative of the composed mapping
tends to a nonzero constant. In particular, we have a mapping, which is analytic on
$\Dbb$, continuous on $\overline{\Dbb}$ and injective on $\pa
\Dbb$. This implies it actually has to be a conformal bijection $\Dbb
\rightarrow \t \Omega$ by \cite[Corollary 2.10]{pomm}.
\end{proof}

We are now ready to prove the main result of this section. Arguing as in the
previous section, this result readily yields Theorem~\ref{T.one}.

\begin{proposition}
There exists a $g_0>0$, such that for every $|g|\leq g_0$ there exists
an $H^3$ solution of the capillary-gravity wave problem (Problem~\ref{capgrav}) such that the corresponding interface exhibits a splash singularity.
\end{proposition}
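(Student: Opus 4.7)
The strategy is to carry out the Lyapunov--Schmidt reduction of Section~\ref{S.two} in the simpler water-wave setting $\epsilon=0$, obtain a smooth family of solutions parametrized by the Crapper parameter~$A$ and the gravity~$g$, and then use a continuity (intermediate-value) argument in~$A$ to pass from configurations with transverse self-intersections to configurations without self-intersections, thereby capturing a splash curve in between. That this splash curve defines a genuine weak solution of \eqref{euler}--\eqref{irrot} is then supplied by Lemma~\ref{lem:splash}.

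In detail, I would first repeat the construction of Theorem~\ref{main:thm} with $G$ replaced by its first component $G_1$ alone, regarded as a smooth map $H^2_{\rm odd}\times\Rbb^2\to L^2_{\rm even}$ in the variables $(\theta;g,\kappa)$. As noted in Remark~\ref{rem:1}, the structural results for the linearization $\Ga$ in Lemma~\ref{lem:Ga} and the solvability calculation
$$
\langle \cos\theta_A,\,\partial_\kappa G_1\rangle=-\int_{-\pi}^{\pi}e^{-H\theta_A(\alpha)}\cos\theta_A(\alpha)\,d\alpha=-2\pi
$$
used to invoke the implicit function theorem in Theorem~\ref{main:thm} do not rely on the arc-chord condition for $z_A$ and thus remain valid for every $A\in(-1,1)$, in particular on a full neighborhood of the splash value $A_0$. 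A double application of the implicit function theorem therefore yields, for each such $A$ and each sufficiently small $|g|\le g_0$, a unique $\theta(A,g)\in H^2_{\rm odd}$ and $\kappa_*(A,g)\in\Rbb$, both smooth in $(A,g)$, satisfying $\theta(A,0)=\theta_A$, $\kappa_*(A,0)=0$, and
$$
G_1(\theta(A,g);\,0,g,\kappa_*(A,g))=0.
$$
Reading the derivative $d\theta/d\alpha$ off equation~\eqref{eq:brn} and using that $H^2(\Tbb)$ is an algebra in one dimension promotes $\theta$ from $H^2$ to $H^3$, so the associated interface $z=I(\theta)$ also lies in $H^3$.

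The central step is then a continuity argument in $A$ for fixed small $g$. At $g=0$, the Crapper interface $z(A,0)=z_A$ has no self-intersection for $A<A_0$ and exactly two transverse self-intersections for $A\in(A_0,A_0+\delta)$ with $\delta$ small. Since $z(A,g)$ depends smoothly on its arguments, for every $|g|\le g_0$ this dichotomy survives: choosing $A_1<A_0<A_2$ sufficiently close to $A_0$, the curve $z(A_1,g)$ has no self-intersection while $z(A_2,g)$ still has a pair of transverse self-intersections. The symmetry $z_1(-\alpha,g)=-z_1(\alpha,g)$ forces any self-intersection of $z(\cdot,g)$ to involve two points symmetric about the $y$-axis, so as $A$ decreases from $A_2$ toward $A_1$ these two symmetric intersection points must approach each other along the axis of symmetry and coalesce tangentially at some intermediate $A_*(g)\in(A_1,A_2)$, producing a splash. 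Lemma~\ref{lem:splash} then lifts the resulting stationary interface $z(A_*(g),g)$ to an honest weak solution of \eqref{euler}--\eqref{irrot} for every $|g|\le g_0$, which is the desired conclusion.

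The main obstacle I anticipate is the continuity step: one has to verify that, in the smooth symmetric one-parameter family $\{z(A,g)\}_{A}$, the unique way to transition from two transverse crossings to none is through a splash rather than, for instance, a splat. A secondary technical point is confirming, at the level of the Crapper family itself, the transversality of the crossings for $A$ slightly above $A_0$, which amounts to an explicit computation using the closed-form parametrization of~$z_A$; this transversality is precisely what allows the conformal ``opening up'' of the domain in the proof of Lemma~\ref{lem:splash} to work, and it ensures that the intermediate-value crossing captured above is indeed a generic tangential self-contact at a single point.
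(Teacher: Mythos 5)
Your proposal follows essentially the same route as the paper: apply the implicit function theorem to $G_1$ alone near $(0,0,A_0;\theta_{A_0})$ (as licensed by Remark~\ref{rem:1}), use continuity in $A$ between a non-self-intersecting curve at $A_0-\delta$ and a transversally self-intersecting one at $A_0+\delta$ to capture a splash at some intermediate $A_*(g)$, and invoke Lemma~\ref{lem:splash} to upgrade it to a solution of \eqref{euler}--\eqref{irrot}. The one obstacle you flag --- ruling out a splat or other degenerate transition --- is resolved in the paper by noting that, for $(g,A)$ in the small neighbourhood, $\Theta(g,A)$ inherits from $\theta_A$ the property of being concave on $[-\pi,0]$ and convex on $[0,\pi]$, which together with the symmetry forces the two transverse crossings to coalesce into a single tangential contact on the axis.
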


\begin{proof}
Since we are only concerned with solutions to
$$
G_1(g, \ka, A ;\, \theta) = q(A)\frac{d\theta}{d\al} + \sinh H\theta + g e^{-H\theta} \Imag I(\theta) - \ka e^{- H\theta} =0,
$$ 
Remark \ref{rem:1} ensures that there are solutions to this equation
in a neighborhood of  the point $(0,0,A_0;\,\theta_{0})$, where we set $\theta_0 :=
\theta_{A_0}$. Omitting the dependence on $\ka$ for notational
simplicity, for small enough $g_0$ and $\de$ we obtain a unique smooth mapping $\Theta$ which maps
$|g|\leq g_0$, $|A - A_0|\leq \de$ to a neighborhood of $\theta_0\in
H^3_{\rm odd}$ and solves the equation 
$$
G_1(g,A;\, \Theta(g,A)) = 0.
$$
By uniqueness, this mapping essentially coincides with mapping found in Theorem
\ref{main:thm} on their common domain of definition.

It is well-known that the interface of the Crapper solution with parameter $A_0$ has a
splash point, while for values of $A$ slightly smaller (resp.\ larger) than
$A_0$ the curve does not self-intersect (resp.\ intersects
transversally at exactly two points). By continuity and transversality, and shrinking the neighborhoods if necessary, we can
ensure that the corresponding curve $z(\al)$ remains injective for
$\Theta(g, A_0 - \delta)$ and any $|g|\leq g_0$, while it
self-intersects transversally and at exactly two points for $\Theta(g, A_0 + \delta)$
and all $|g|\leq g_0$. 

In fact, by taking the real part of \eqref{ref:3} we can read    
$$
\theta_0(\al) = 2\arctan\Big(\frac{2A_0 \sin \al}{1-A_0^2}\Big).
$$
Its second derivative $\theta''_0(\alpha)$ is an odd, periodic function, and therefore zero for $\alpha = \{ 0, \pm\pi\}$. On the other hand, a straightforward calculation shows 
$$
\theta_0''(\al)>0, \quad  \al\in(-\pi, 0) \quad \mathrm{and}\quad \theta_0''(\al)<0,  \quad \al\in(0, \pi).
$$ 
The perturbed solution $\Theta(g,A)$ can be chosen arbitrarily close to $\theta_0$ in the $\Ccal^2$-norm, so by shrinking the neighborhoods we can ensure $\Theta''(g,A)$ is strictly positive on $(-\pi + \epsilon, -\epsilon)$ and strictly negative on $(\epsilon, \pi - \epsilon)$ for some arbitrarily small $\epsilon>0$. We choose $\epsilon$ in such a way that the point of self-intersection of the curve $z_{A}$ be contained in $(-\epsilon, \epsilon)$. In particular, in this smaller interval, the curves corresponding to the perturbed solutions intersect the symmetry axis $\{x = 0\}$ on at most two points. By continuity, for each $|g|\leq g_0$, there exists some $A$ with $|A-A_0|<\delta$ such that the curve corresponding 
to $\Theta(g, A)$ exhibits a splash
singularity. Although the connection
between the Crapper solutions with $A>A_0$ and the Euler equations is
uncertain,  Lemma~\ref{lem:splash} ensures that it is indeed a
solution of the Problem~\ref{capgrav}, so the theorem follows.
\end{proof}

\section*{Acknowledgements}

A.E.\ is supported by the ERC grant 633152. The authors are supported in part by the ICMAT Severo
Ochoa grant SEV-2011-0087 and by the grants of the Spanish MINECO MTM2011-26696 (D.C.\ and
N.G.) and FIS2011-22566 (A.E.).

\end{document}